\newcommand{\ran}{\operatorname{ran}}
\newcommand{\cran}{\overline{\operatorname{ran}}\, }
\newcommand{\diag}{\operatorname{diag}}
\newcommand{\norm}[1]{\| #1 \|}
\newcommand{\h}{\mathcal{H}}
\newcommand{\K}{\mathcal{K}}
\newcommand{\minimatrix}[4]{\begin{bmatrix} #1 & #2 \\ #3 & #4 \end{bmatrix}  }
\newcommand{\megamatrix}[9]{\begin{bmatrix} #1 & #2 & #3 \\ #4 & #5 & #6 \\ #7 &
#8 & #9\end{bmatrix}  }
\renewcommand{\phi}{\varphi}
\renewcommand{\epsilon}{\varepsilon}
\theoremstyle{plain}
\newtheorem{Theorem}{Theorem}
\newtheorem{Lemma}[Theorem]{Lemma}
\theoremstyle{definition}
\newtheorem*{Definition}{Definition}
\newtheorem{Example}[Theorem]{Example}
\newtheorem{Question}{Question}
\begin{document}
\bibliographystyle{amsplain}

    \title[Two remarks about nilpotent operators of order two]{Two remarks about nilpotent\\operators of order two}

    \author[S. R. Garcia]{Stephan Ramon Garcia}
    \address{   Department of Mathematics\\
            Pomona College\\
            Claremont, California\\
            91711 \\ USA}
    \email{Stephan.Garcia@pomona.edu}
    \urladdr{\url{http://pages.pomona.edu/~sg064747}}
    \thanks{The first two authors are partially supported by National Science
Foundation Grant DMS-1001614.}

    \author{Bob Lutz}
    \address{   Department of Mathematics\\
            Pomona College\\
            Claremont, California\\
            91711 \\ USA}
    \email{boblutz13@gmail.com}
    
	\author[D.~Timotin]{Dan Timotin}
	\address{Simion Stoilow Institute of Mathematics of the Romanian Academy, PO Box 1-764, Bucharest 014700, Romania}
	\email{Dan.Timotin@imar.ro}

    \keywords{Nilpotent operator, complex symmetric operator, Toeplitz operator,
model space, truncated Toeplitz operator,
    unitary equivalence.}

    \begin{abstract}
	    We present two novel results about Hilbert space operators which
	    are nilpotent of order two.  First, we prove that such operators are
	    \emph{indestructible} complex symmetric operators, in the sense that
	    tensoring them with any operator yields a complex symmetric operator.
	    In fact, we prove that this property characterizes nilpotents of order two
	    among all nonzero bounded operators.
	    Second, we establish that every nilpotent of order two is unitarily
	    equivalent to a truncated Toeplitz operator.
    \end{abstract}

\maketitle

\section{Introduction}

	In the following, $\h$ denotes a separable complex Hilbert space and
	$B(\h)$ denotes the set of all bounded linear operators on $\h$.  Recall that 
	an operator $T$ in $B(\h)$ is called \emph{nilpotent} if $T^n = 0$
	for some positive integer $n$.  The least such $n$ is called the
	\emph{order} of nilpotence of $T$.  This note concerns two rather
	unusual properties of operators which are nilpotent of order two.	
		
	The first result involves complex symmetric operators
	(see Section~\ref{se:prelim} for background).  It is known that
	every operator which is nilpotent of order two is a
	complex symmetric operator (Lemma~\ref{TheoremCSO}).  However,
	these operators are complex symmetric in a much stronger sense, for
	the tensor product of a nilpotent of order two with an arbitrary operator
	always yields a complex symmetric operator.  We prove in Section~\ref{se:indestructible} 
	that this property actually characterizes nilpotents of order two
	among all nonzero bounded operators.
    
	Our second result concerns truncated Toeplitz operators (precise definitions are given in
	Section \ref{se:tto}).  To be more specific, we prove that every operator which is nilpotent 
	of order two is unitarily equivalent to a truncated Toeplitz operator having an analytic symbol. This is
	relevant to a series of open problems, first arising in \cite{TTOSIUES} and developed further in
	\cite{ATTO}, which, in essence, ask whether an arbitrary complex symmetric operator is unitarily equivalent to 
	a truncated Toeplitz operator (or possibly a direct sum of such operators).
			
	We close the paper with several open questions suggested by these results.

\section{Complex symmetry}\label{se:prelim}

	Before proceeding, let us recall a few basic definitions \cite{G-P,G-P-II,CCO}.  
	A \emph{conjugation} on a complex Hilbert space $\h$ is a
	conjugate-linear, isometric involution.  We say that an operator 
	$T$ in $B(\h)$ is \emph{complex symmetric} if there exists a
	conjugation $C$ on $\h$ such that $T=CT^*C$.  In this case, 
	we say that $T$ is \emph{$C$-symmetric}.  The terminology 
	reflects the fact that an operator is complex symmetric if and only if 
	it has a self-transpose matrix representation with respect to some 
	orthonormal basis.  In fact, any orthonormal basis which is
	\emph{$C$-real}, in the sense that each basis vector is fixed by
	$C$, yields such a matrix representation.

	It is known that if $T$ is nilpotent of order two, then
	$T$ is a complex symmetric operator.  This was first established
	directly in \cite[Thm.~5]{ATCSO}, using what
	we now recognize as a somewhat overcomplicated argument.  
	Later on, this result was obtained 
	as a corollary of the more general fact that every binormal operator 
	is complex symmetric \cite[Thm.~2, Cor.~4]{SNCSO}.
	A much simpler direct proof is provided below. In what follows,
	we denote unitary equivalence by $\cong$.

	\begin{Lemma}\label{TheoremCSO}
		If $T$ is nilpotent of order two, then $T$ is a
		complex symmetric operator.
	\end{Lemma}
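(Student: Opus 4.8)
The plan is to put $T$ into a simple block form dictated by the relation $\ran T\subseteq\ker T$, which holds because $T^2=0$. Writing $\M=\cran T$ and noting $\M\subseteq\ker T$ (the kernel being closed), I would decompose $\h=\M\oplus\M^{\perp}$; since $T$ annihilates $\M$ and sends everything into $\M$, its matrix takes the form $T=\minimatrix{0}{B}{0}{0}$ for some $B\colon\M^{\perp}\to\M$ with dense range. The subspace $\ker B\subseteq\M^{\perp}$ splits off a zero summand, on which $T$ acts as $0$ and which is trivially complex symmetric; since a direct sum of complex symmetric operators (with the direct sum of the corresponding conjugations) is again complex symmetric, I may assume $B$ is injective with dense range. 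In that case $(\ker B)^{\perp}=\M^{\perp}$ and $\M$ are unitarily identified through the polar decomposition, a point the construction below exploits.

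Next I would invoke the polar decomposition $B=U|B|$, where $|B|=(B^{*}B)^{1/2}\ge 0$ acts on $\M^{\perp}$ and $U\colon\M^{\perp}\to\M$ is the partial isometry; because $B$ is injective with dense range, $U$ is in fact unitary ($U^{*}U=I$ and $UU^{*}=I$) and $|B|$ is injective with dense range. The core of the argument is to produce a conjugation $J$ on $\M^{\perp}$ commuting with $|B|$. This is where the spectral theorem enters: modelling the positive operator $|B|$ as multiplication by a nonnegative function on some $L^{2}(\mu)$, pointwise complex conjugation commutes with this real multiplication, and transporting it back yields a conjugation $J$ on $\M^{\perp}$ with $J|B|=|B|J$. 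With $J$ in hand I would define $C\colon\M\oplus\M^{\perp}\to\M\oplus\M^{\perp}$ by $C(x\oplus y)=UJy\oplus JU^{*}x$.

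It remains to check that $C$ is a conjugation and that $CT^{*}C=T$. Conjugate-linearity is immediate since $U,U^{*}$ are linear and $J$ is conjugate-linear, while the isometry and involution properties $C^{2}=I$ follow from $U^{*}U=I$, $UU^{*}=I$, and $J^{2}=I$ by a short computation. For the symmetry relation, since $C^{2}=I$ it suffices to verify $TC=CT^{*}$; computing $TC(x\oplus y)=U|B|JU^{*}x\oplus 0$ and, using $B^{*}=|B|U^{*}$, $CT^{*}(x\oplus y)=UJ|B|U^{*}x\oplus 0$, the identity reduces to $J|B|=|B|J$, which is exactly how $J$ was chosen. Reassembling with the zero summand on $\ker B$ then exhibits $T$ as a $C$-symmetric operator. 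I expect the only genuine subtlety to be the existence of a conjugation commuting with $|B|$ when $|B|$ has continuous spectrum, which the multiplication-operator model resolves; the block reduction and the verification of $CT^{*}C=T$ are routine.
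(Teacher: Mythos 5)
Your proof is correct and follows essentially the same route as the paper: both arguments rest on the block decomposition forced by $\cran T\subseteq\ker T$, splitting off the zero summand, the polar decomposition, and a conjugation commuting with the positive part supplied by the spectral theorem. The only difference is bookkeeping: the paper first passes to a unitarily equivalent operator whose off-diagonal entry is the positive operator $A$ and then uses the antidiagonal conjugation $\minimatrix{0}{J}{J}{0}$, whereas you keep $B=U|B|$ in place and absorb the unitary $U$ into your conjugation $C(x\oplus y)=UJy\oplus JU^{*}x$.
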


	\begin{proof}
		If $T$ in $B(\h)$ satisfies $T(Tx) = T^2 x = 0$ for every $x$ in $\h$, it follows that
		$\cran T \subseteq \ker T = \ker|T|$.  Considering the polar
		decomposition of $T$, we see that
		\begin{equation}\label{eq:form nilp 2}
			T \cong \minimatrix{0}{0}{A}{0} \oplus 0
		\end{equation}
		where $A$ is a positive operator with dense range (the zero
		direct summand, which acts on $\ker T \ominus \cran T$, may be 
		absent).  Without loss of generality, we may  assume that  
		$\ker T = \cran T$.  If $J$ is any conjugation
		which commutes with $A$ (the existence of such a $J$ follows
		immediately from the Spectral Theorem), we find that
		\begin{equation*}
			 \underbrace{ \minimatrix{0}{0}{A}{0} }_T
			 =
			\underbrace{ \minimatrix{0}{J}{J}{0} }_C
			\underbrace{ \minimatrix{0}{A}{0}{0} }_{T^*}
			\underbrace{ \minimatrix{0}{J}{J}{0} }_C,
		\end{equation*}
		whence $T$ is a complex symmetric operator.
	\end{proof}

	For operators on a finite dimensional space, there is a quite explicit proof. 
	Indeed, the positive semidefinite matrix $A$ is unitarily equivalent to a diagonal
	matrix $D=\diag(\lambda_1,\lambda_2,\ldots,\lambda_n)$ where 
	$\lambda_1\geq \lambda_2\geq \cdots \geq \lambda_n \geq 0$,
	whence
	\begin{equation*}
		\minimatrix{0}{0}{A}{0}
		\cong \minimatrix{0}{0}{D}{0}
		\cong \bigoplus_{i=1}^n \minimatrix{0}{0}{\lambda_i}{0}
		\cong \bigoplus_{i=1}^n \frac{\lambda_j}{2} \minimatrix{1}{i}{i}{-1}.
	\end{equation*}

\section{Indestructible complex symmetric operators}\label{se:indestructible}

	In the following, $\h$ and $\K$ denote separable complex Hilbert spaces while $A$ and $B$
	are bounded operators on $\h$ and $\K$, respectively.  Recall that the operator $A \otimes B$
	acts on the space $\h \otimes \K$ and satisfies
	\begin{equation}\label{eq-TensorNorm}
		\norm{A\otimes B}_{\h\otimes \K} = \norm{A}_{\h}\norm{B}_{\K},
	\end{equation}
	the subscripts being suppressed in practice.  The following relevant lemma is from
	\cite[Sect.~10]{G-P}, where it is stated without proof.  
		
	\begin{Lemma}
		The tensor product of complex symmetric operators is complex symmetric.
	\end{Lemma}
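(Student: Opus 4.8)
The plan is to reduce the claim to the defining property of complex symmetric operators: I must produce a conjugation on the tensor product space $\h \otimes \K$ that intertwines $A \otimes B$ with its adjoint. Suppose $A$ is $C$-symmetric and $B$ is $D$-symmetric, so that $A = CA^*C$ and $B = DB^*D$, where $C$ is a conjugation on $\h$ and $D$ is a conjugation on $\K$. The natural candidate is the conjugation $C \otimes D$ on $\h \otimes \K$, defined on elementary tensors by $(C \otimes D)(x \otimes y) = Cx \otimes Dy$ and extended conjugate-linearly. The first order of business is therefore to verify that $C \otimes D$ is genuinely a conjugation, i.e.\ that it is well-defined, conjugate-linear, isometric, and an involution.

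First I would confirm that $C \otimes D$ is well-defined and conjugate-linear. Because $C$ and $D$ are each conjugate-linear, the map $(x,y) \mapsto Cx \otimes Dy$ is jointly conjugate-bilinear, so it descends to a conjugate-linear map on the algebraic tensor product, and I then check it extends continuously to the completion. Next I would verify the involution property: applying $C \otimes D$ twice to $x \otimes y$ gives $C^2 x \otimes D^2 y = x \otimes y$, since $C$ and $D$ are involutions, and this extends by conjugate-linearity and continuity to all of $\h \otimes \K$. For isometry, I would use that on elementary tensors $\norm{Cx \otimes Dy} = \norm{Cx}\,\norm{Dy} = \norm{x}\,\norm{y} = \norm{x \otimes y}$; the cleaner route is to check that $C \otimes D$ preserves inner products up to conjugation, namely $\inner{(C\otimes D)(x_1 \otimes y_1), (C \otimes D)(x_2 \otimes y_2)} = \overline{\inner{x_1 \otimes y_1, x_2 \otimes y_2}}$, which follows from the corresponding antiunitary property of $C$ and $D$ separately.

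With $C \otimes D$ established as a conjugation, the remaining step is the intertwining identity. I would compute, on elementary tensors,
\begin{equation*}
	(C \otimes D)(A \otimes B)^*(C \otimes D)
	= (C \otimes D)(A^* \otimes B^*)(C \otimes D)
	= (CA^*C) \otimes (DB^*D)
	= A \otimes B,
\end{equation*}
using the standard fact that $(A \otimes B)^* = A^* \otimes B^*$ together with the hypotheses $A = CA^*C$ and $B = DB^*D$. By linearity and continuity this identity holds on all of $\h \otimes \K$, which shows that $A \otimes B$ is $(C \otimes D)$-symmetric, completing the proof.

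I expect the only genuine subtlety to be the routine but necessary verification that $C \otimes D$ is well-defined and bounded on the completed tensor product rather than merely on the algebraic tensor product; once that bookkeeping is handled, the algebraic intertwining computation is immediate. The main obstacle, such as it is, lies in being careful that conjugate-linearity interacts correctly with the tensor construction, since tensoring two conjugate-linear maps must still yield a single conjugate-linear (not bilinear) map on the product space.
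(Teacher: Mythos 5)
Your proof is correct and follows essentially the same route as the paper: both construct the tensor-product conjugation $C \otimes D$ on $\h \otimes \K$ and verify that $A \otimes B$ is symmetric with respect to it. The only cosmetic difference is that the paper defines this conjugation by fixing a $C$-real and a $D$-real orthonormal basis (which sidesteps the well-definedness bookkeeping you rightly flag), whereas you define it directly on elementary tensors; the resulting conjugation is the same.
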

	
	\begin{proof}
		Suppose that $A$ and $B$ are operators and that $C$ and $J$ are 
		conjugations on $\h$ and $\K$, respectively, such that
		$A = CA^*C$ and $B = JB^*J$.
		Let $u_i$ and $v_j$ denote $C$-real and $J$-real orthonormal 
		bases of $\h$ and $\K$, respectively.
		Define a conjugation $C\otimes J$ on $\h\otimes \K$ by first setting 
		$(C\otimes J)(u_i \otimes v_j) = u_i \otimes v_j$
		on the orthonormal basis $u_i \otimes v_j$ of $\h\otimes \K$ and then extending
		this to $\h\otimes \K$ by conjugate-linearity and continuity.  One can then 
		check that $A\otimes B$ is $(C\otimes J)$-symmetric.
	\end{proof}

	On the other hand, it is possible for $A \otimes B$ to be 
	complex symmetric even if neither $A$ nor $B$ is complex symmetric.
	The following lemma provides a simple method for constructing such examples.

	\begin{Lemma}\label{le:conj_tens_prod}
		For each $A$ in $B(\h)$ and each conjugation $J$ on $\h$, 
		the operator $T = A\otimes JA^*J$ is complex symmetric.
	\end{Lemma}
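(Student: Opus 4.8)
The plan is to exhibit a single conjugation $C$ on $\h\otimes\h$ and verify directly that $T = CT^*C$, where $T = A\otimes JA^*J$. The natural candidate combines $J\otimes J$ with the coordinate flip. Precisely, let $W$ denote the swap unitary on $\h\otimes\h$ determined by $W(x\otimes y)=y\otimes x$, and set $C = W(J\otimes J)$, so that
\begin{equation*}
	C(x\otimes y) = Jy\otimes Jx
\end{equation*}
on elementary tensors, extended by conjugate-linearity and continuity.

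First I would confirm that $C$ is indeed a conjugation. It is conjugate-linear, being the composite of the linear unitary $W$ with the conjugate-linear $J\otimes J$. Using the identity $\inner{Ju,Jv}=\inner{v,u}$ valid for any conjugation, a one-line computation on elementary tensors gives $\inner{C(x_1\otimes y_1),C(x_2\otimes y_2)}=\inner{x_2\otimes y_2,\,x_1\otimes y_1}$, so $C$ is isometric; and since $W$ and $J\otimes J$ commute while $W^2=I$ and $J^2=I$, one obtains $C^2=I$. Thus $C$ is a conjugation on $\h\otimes\h$.

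Next I would record the adjoint of $T$. The key subsidiary fact is that $(JSJ)^*=JS^*J$ for every $S$ in $B(\h)$, which follows from the relation $\inner{Ju,v}=\inner{Jv,u}$. Taking $S=A^*$ yields $(JA^*J)^*=JAJ$, whence $T^*=A^*\otimes JAJ$. The symmetry relation is then verified on elementary tensors by the chain
\begin{equation*}
	CT^*C(x\otimes y)
	= CT^*(Jy\otimes Jx)
	= C\bigl(A^*Jy\otimes JAx\bigr)
	= Ax\otimes JA^*Jy
	= T(x\otimes y),
\end{equation*}
where the crucial cancellation is $JAJ(Jx)=JAx$ and $J(JAx)=Ax$. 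Since elementary tensors span a dense subspace and all maps involved are continuous, this establishes $T=CT^*C$ on all of $\h\otimes\h$.

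The only point requiring genuine care is the bookkeeping with the antilinear map $J$: one must keep straight the distinction between the conjugate-linear $J$ and the linear operators $A$, $A^*$, and verify the two identities $\inner{Ju,Jv}=\inner{v,u}$ and $(JSJ)^*=JS^*J$ before the final computation goes through cleanly. Everything else is a direct evaluation on elementary tensors, so I do not anticipate any serious obstacle beyond this routine but delicate verification.
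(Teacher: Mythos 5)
Your proof is correct and follows exactly the paper's approach: the conjugation $C = W(J\otimes J)$ you construct is precisely the paper's $C = \Phi(J\otimes J)$, where $\Phi$ is the same coordinate flip. The paper merely asserts that one can check $T = CT^*C$; you have carried out that verification (including the identities $\inner{Ju,Jv}=\inner{v,u}$ and $(JSJ)^*=JS^*J$) correctly.
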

	
	\begin{proof}
		If $\Phi$ in $B(\h\otimes \h)$ is defined first on simple tensors by 
		$\Phi(x\otimes y)=y\otimes x$ and then extended to $\h\otimes \h$ in the natural way,
		then $C=\Phi(J\otimes J)$ is a conjugation on $\h\otimes \h$
		with respect to which $T=CT^*C$.	
	\end{proof}

	\begin{Example} 
		Suppose $\h=\ell^2$, $A=S$ (the unilateral shift), and $J$ is
		entry-by-entry complex conjugation on $\ell^2$.
		There are many ways to see that $S$ is not a complex symmetric operator
		\cite[Cor.~7]{MUCFO}, \cite[Prop.~1]{G-P}, \cite[Ex.~2.14]{CCO},
		\cite[Thm.~4]{SNCSO}.  On the other hand, 
		Lemma~\ref{le:conj_tens_prod} implies that
		$S \otimes S^*$ is complex symmetric.  In fact, 
		\begin{equation*}
			S\otimes S^* \cong \bigoplus_{n=0}^{\infty} J_n(0),
		\end{equation*}
		where $J_n(0)$ denotes a $n \times n$ nilpotent Jordan block,
		which is complex symmetric by \cite[Ex.~4]{G-P}.  To see this,
		note that $S\otimes S^*$ is unitarily equivalent to the operator
		\begin{equation*}
			[Tf](z,w) = z\left(\frac{f(z,w) - f(z,0)}{w}\right)
		\end{equation*}
		on the Hardy space $H^2_2$ on the bidisk and observe that
		$H_2^2 = \bigoplus_{n=0}^{\infty} \mathcal{P}_n$ where $\mathcal{P}_n$ 
		denotes the set of all homogeneous polynomials $p(z,w)$ of degree $n$.  
		Each subspace $\mathcal{P}_n$ reduces $T$ and 
		$T|_{\mathcal{P}_n} \cong J_n(0)$.
	\end{Example}
	
	Having briefly explored the interplay between tensor products and complex
	symmetric operators, we come to the following definition.

	\begin{Definition}
		An operator $A$ in $B(\h)$ is called an \emph{indestructible} complex
		symmetric operator if $A \otimes B$ is a complex symmetric operator on $\h\otimes\K$
		for all $B$ in $B(\K)$.
	\end{Definition}

	Let us note that an indestructible complex symmetric operator must
	indeed be complex symmetric since $A \otimes 1 \cong A$. 
	Clearly, indestructibility is a rather strong property.  In fact, from the definition alone, 
	it is not immediately clear whether any nonzero examples exist.  
	As we will see, the nonzero indestructible complex symmetric
	operators are precisely those operators which are nilpotent of order two.
	
	\begin{Theorem}
		$T$ is an indestructible complex symmetric operator 
		if and only if $T$ is nilpotent of order $\leq 2$.
	\end{Theorem}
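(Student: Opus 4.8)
The easy direction is immediate: if $T^2=0$, then $(T\otimes B)^2=T^2\otimes B^2=0$ for every $B$, so $T\otimes B$ is nilpotent of order $\leq 2$ and hence complex symmetric by Lemma~\ref{TheoremCSO}. Thus every nilpotent of order $\leq 2$ is indestructible, and it remains to prove the converse.

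For the converse I would extract a single, dimension-free consequence of complex symmetry. Because a complex symmetric operator $N$ has a symmetric matrix in some orthonormal basis (as recalled above), it is unitarily equivalent to its transpose: $N=UN^{t}U^{*}$ for some unitary $U$. Consequently, for any word $u$ in two noncommuting letters, writing $u(N,N^{*})$ for the corresponding operator product and $\tilde u$ for the reversed word, the operator $u(N,N^{*})=U\,u(N^{t},(N^{t})^{*})U^{*}=U\,(\tilde u(N,N^{*}))^{t}U^{*}$; since transposition preserves the operator norm,
\[
\norm{u(N,N^{*})}=\norm{\tilde u(N,N^{*})}.
\]
Now suppose $T$ is indestructible. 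Taking $B=1$ shows $T$ itself is complex symmetric, and for every $B$ the operator $T\otimes B$ is complex symmetric. Applying the displayed reversal identity to both $T$ and $T\otimes B$, and using $\norm{A\otimes B}=\norm{A}\,\norm{B}$ together with the fact that words distribute across tensor products, I obtain, for every word $u$ and every $B$,
\[
\norm{u(T,T^{*})}\bigl(\norm{u(B,B^{*})}-\norm{\tilde u(B,B^{*})}\bigr)=0 .
\]

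The plan is then to choose one word and one auxiliary operator that together isolate $T^{2}$. I would take $w=TTT^{*}TT^{*}T^{*}$, for which the associated operator is
\[
w(T,T^{*})=T^{2}(T^{*}T)(T^{2})^{*}=(T^{2}|T|)(T^{2}|T|)^{*}\geq 0,
\]
so that $\norm{w(T,T^{*})}=0$ forces $T^{2}|T|=0$, and hence $T^{2}=0$ (as $T^{2}$ already annihilates $\ker T=\ker|T|$, while $|T|$ has dense range in $(\ker T)^{\perp}$). For $B$ I would take the $3\times 3$ weighted shift $W=\megamatrix{0}{1}{0}{0}{0}{2}{0}{0}{0}$; a direct computation gives $w(W,W^{*})=16\,E_{11}$ and $\tilde w(W,W^{*})=4\,E_{33}$, so $\norm{w(W,W^{*})}=16\neq 4=\norm{\tilde w(W,W^{*})}$. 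Feeding $u=w$ and $B=W$ into the factored identity then forces $\norm{w(T,T^{*})}=0$, whence $T^{2}=0$.

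The main obstacle is precisely the discovery of this $(w,W)$ pair. The reversal identity kills nothing by itself, and not every choice of $B$ detects the order: for the unilateral shift $S$, each $J_{k}(0)\otimes S$ is complex symmetric for all $k$ (it decomposes as an orthogonal sum of equally weighted nilpotent Jordan blocks), so tensoring with a shift can \emph{never} force $T^{2}=0$. One therefore needs a word whose $T$-factor is a positive operator vanishing exactly when $T^{2}=0$, paired with a non-complex-symmetric $B$ that genuinely breaks the reversal symmetry on that same word. Verifying that $w$ and $W$ enjoy these two properties simultaneously—in particular the inequality $16\neq 4$—is the crux; the transpose characterization, the norm factorization, and the positivity identity for $w(T,T^{*})$ are then routine and remain valid in arbitrary separable dimension.
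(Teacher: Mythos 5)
Your proof is correct and takes essentially the same route as the paper: both derive from indestructibility (via tensor-norm multiplicativity and the word-norm identity coming from complex symmetry of $T$ and $T\otimes B$) the factored identity $\norm{u(T,T^*)}\bigl(\norm{u(B,B^*)}-\norm{\tilde u(B,B^*)}\bigr)=0$, and then annihilate a word in $T,T^*$ by testing against a $3\times 3$ nilpotent weighted shift with unequal weights. The only differences are cosmetic: the paper uses the letter-swap identity $w(T,T^*)=Cw(T^*,T)C$ (equivalent to your transpose/reversal identity via adjoints) and the shorter word $yx^2$, which gives $T^*T^2=0$ and hence $T^2=0$ directly, without needing your positivity argument for $T^2|T|$.
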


	\begin{proof}
		If $A$ is nilpotent of order $\leq 2$, then $A \otimes B$ is 
		also nilpotent of order $\leq 2$.
		By Lemma~\ref{TheoremCSO}, $A \otimes B$ is 
		complex symmetric whence $A$ is indestructible.

		Before embarking on the remaining implication, let us first 
		remark that if $T$ is $C$-symmetric, then
		\begin{equation}\label{eq:word}
			w(T,T^*)=Cw(T^*, T)C
		\end{equation}
		holds for each word $w(x,y)$ in the noncommuting variables $x,y$.
		This fact will be useful in what follows.	
		
		Now suppose that $A$ is an indestructible complex
		symmetric operator. 
		For any other operator $B$ and any word 
		$w(x,y)$, we obtain
		\begin{align*}
			\norm{w(A,A^*)}\norm{w(B,B^*)}
			&= \norm{ w(A,A^*)\otimes w(B,B^*)} \\
			&= \norm{ w(A \otimes B, A^*\otimes B^*) } \\
			&= \norm{ w(A^*\otimes B^*, A\otimes B) } && \text{by \eqref{eq:word}}\\
			&=\norm{ w(A^*,A) } \norm{w(B^*,B)}.
		\end{align*}
		Since $A$ is complex symmetric we apply \eqref{eq:word} again to obtain
		\begin{equation}\label{eq:eq of norms}
			\|w(A, A^*)\| \|w(B, B^*)\|= \|w(A, A^*)\| \|w(B^*, B)\|.
		\end{equation}
		Letting $w(x,y)=yx^2$ and 
		\begin{equation*}
			B = \megamatrix{0}{\alpha}{0}{0}{0}{\beta}{0}{0}{0},
		\end{equation*}
		where $\alpha,\beta$ are non-negative real numbers, a simple
		computation reveals that
		\begin{equation*}
			\norm{w(B, B^*)} = \alpha^2 \beta,\qquad
			\norm{w(B^*, B)} = \alpha \beta^2.
		\end{equation*}
		If $\alpha\not=\beta$, then \eqref{eq:eq of norms}  implies that
		$\norm{w(A, A^*)}= 0$ so that $(A^*A)A = 0$.  Therefore $\ran A \subseteq
		\ker A^*A = \ker A$ whence $A^2 = 0$, as desired.
	\end{proof}

\section{Unitary equivalence to a truncated Toeplitz operator}\label{se:tto}

	The study of truncated Toeplitz operators 
	has been largely motivated by a seminal paper of Sarason
	\cite{Sarason}.  We briefly recall the basic definitions, referring the reader
	to the recent survey article \cite{RPTTO} for a more thorough introduction.

	In the following, $H^2$ denotes the classical Hardy space on the
	open unit disk.  For each nonconstant inner function $u$, we consider the 
	corresponding \emph{model space} $\K_u := H^2 \ominus u H^2$.
	Letting $P_u$ denote
	the orthogonal projection from $L^2$ onto $\K_u$, for each
	$\phi$ in $L^{\infty}$ we define the 
	\emph{truncated Toeplitz operator} $A_{\phi}^u : \K_u\to\K_u$ by
	setting 
	\begin{equation*}
		A_{\phi}^uf = P_u(\phi f).
	\end{equation*}
	Each such operator is $C$-symmetric with respect to the conjugation
	$Cf = \overline{fz}u$ on $\K_u$.
	We say that $A_{\phi}^u$ is an \emph{analytic} truncated Toeplitz operator
	if the \emph{symbol} $\phi$ belongs to $H^{\infty}$, in which case $A_{\phi}^u = \phi(A_z^u)$
	by the $H^{\infty}$-functional calculus for $A_z^u$.
	
	A significant amount of evidence has been accumulated which indicates
	that truncated Toeplitz operators provide concrete models for general complex symmetric operators
	\cite{TTOSIUES, ATTO, STZ}.  In fact, a surprising array of complex symmetric operators
	can be shown to be unitarily equivalent to truncated Toeplitz operators.  These results have led to
	several open problems and conjectures \cite[Question 5.10]{TTOSIUES}, \cite[Sect.~7]{ATTO}. We refer
	the reader to \cite[Sect.~9]{RPTTO} for a thorough discussion of the topic.

	By Lemma~\ref{TheoremCSO}, we know that operators which are nilpotent of
	order two are complex symmetric.   We now go a step further
	and prove that every such operator is
	unitary equivalent to an analytic truncated Toeplitz operator.
		
	Before proceeding, we require a few words about Hankel operators.
	First let us recall that the \emph{Hankel operator} $H_{\phi}:H^2 \to H^2_-$ with \emph{symbol} $\phi$ in $L^{\infty}$
	is the linear operator defined by $H_{\phi}f = P_-(\phi f)$, where $P_-$ denotes
	the orthogonal projection from $H^2$ onto $H^2_-:=L^2 \ominus H^2$.
	A detailed treatise on the subject of Hankel operators is \cite{Peller}.  We refer
	the reader there for a complete treatment of the subject.

	The first result required is the well-known relationship \cite[Ch.~1, eq.~(2.9)]{Peller}
	\begin{equation}\label{eq:Hankel}
		A_{\phi}^u = M_u H_{\overline{u}\phi}|_{\K_u},
	\end{equation}
	where $u$ is an inner function and $\phi$ belongs to $H^{\infty}$.
	The next ingredient is \cite[Ch.~1, Thm.~2.3]{Peller}.
	
	\begin{Lemma}\label{LemmaSymbol}
		For $\psi$ in $L^{\infty}$, the following are equivalent:
		\begin{enumerate}\addtolength{\itemsep}{0.35\baselineskip}
			\item $\ker H_{\psi}$ is nontrivial,
			\item $\ran H_{\psi}$ is not dense in $H^2_{-}$,
			\item $\psi = \overline{u}\phi$ for some inner function $u$ and
				some $\phi$ in $H^{\infty}$.
		\end{enumerate}
	\end{Lemma}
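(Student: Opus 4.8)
The plan is to connect the three conditions through two standard ingredients: Beurling's theorem describing the shift-invariant subspaces of $H^2$, and the elementary identity $H^\infty = H^2 \cap L^\infty$. I would treat (i) and (iii) on the $H^2$ side, handle (ii) by passing to the adjoint, and in both cases extract an inner function as a Beurling generator.

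First I would record the description $\ker H_\psi = \{\, f \in H^2 : \psi f \in H^2 \,\}$, which is immediate from $H_\psi f = P_-(\psi f)$. A one-line computation shows this kernel is invariant under the shift $M_z$: if $\psi f \in H^2$ then $\psi(zf) = z(\psi f) \in H^2$. Hence, by Beurling's theorem, $\ker H_\psi$ is either $\{0\}$ or of the form $u H^2$ for some inner $u$, and this single fact drives the equivalence (i) $\Leftrightarrow$ (iii). For (iii) $\Rightarrow$ (i), if $\psi = \overline{u}\phi$ with $u$ inner and $\phi \in H^\infty$, then $\psi u = \phi$ a.e.\ on the circle, so $H_\psi u = P_-(\phi) = 0$ and the kernel is nontrivial. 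For (i) $\Rightarrow$ (iii), the Beurling form yields $u \in \ker H_\psi$, so $\psi u \in H^2$; since $\psi \in L^\infty$ and $|u| = 1$ a.e., in fact $\psi u \in H^2 \cap L^\infty = H^\infty$, and setting $\phi := \psi u$ gives $\psi = \overline{u}\phi$.

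Next I would establish (i) $\Leftrightarrow$ (ii) on the adjoint side. Since $\cran H_\psi = (\ker H_\psi^*)^\perp$, condition (ii) is equivalent to $\ker H_\psi^* \neq \{0\}$. A direct computation gives $H_\psi^* g = P_+(\overline{\psi} g)$ for $g \in H^2_-$, where $P_+$ denotes the projection onto $H^2$, so that $\ker H_\psi^* = \{\, g \in H^2_- : \overline{\psi} g \in H^2_- \,\}$. This kernel is invariant under multiplication by $\overline{z}$, and applying complex conjugation carries $H^2_-$ onto $zH^2$ while turning $M_{\overline z}$-invariance into $M_z$-invariance. Beurling's theorem then presents the conjugated kernel as $wH^2$ for an inner $w$ with $w(0)=0$, and repeating the $H^2 \cap L^\infty = H^\infty$ argument produces $\phi := \psi w \in H^\infty$ with $\psi = \overline{w}\phi$, giving (ii) $\Rightarrow$ (iii). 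Exhibiting the explicit nonzero element $\overline{z}\,\overline{u}$ of $\ker H_\psi^*$ when (iii) holds closes the loop in the reverse direction, so that (ii) $\Leftrightarrow$ (iii) $\Leftrightarrow$ (i).

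The step I expect to be the main obstacle is the adjoint side: correctly identifying $H_\psi^*$ and, more delicately, managing the conjugation together with the shift by $\overline z$ so that Beurling's theorem applies cleanly on $H^2_-$ without an off-by-one error in the exponents — the role of the extra factor of $\overline z$ is precisely to absorb the constant-term mismatch between $H^2_-$ and $zH^2$. Once the shift-invariance of both kernels is set up correctly, the remaining computations, including each invocation of $H^2 \cap L^\infty = H^\infty$, are routine.
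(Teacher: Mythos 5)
Your proof is correct, but there is nothing in the paper to compare it against: the paper states this lemma as a quotation of a known result (Peller, Ch.~1, Thm.~2.3) and supplies no proof of its own. Your argument is essentially the standard textbook one, and it is complete: Beurling's theorem applied to the closed $M_z$-invariant subspace $\ker H_\psi = \{f \in H^2 : \psi f \in H^2\}$ gives the equivalence of (i) and (iii), with the identity $H^2 \cap L^\infty = H^\infty$ upgrading $\phi := \psi u$ to a bounded analytic function; the passage to the adjoint, with $H_\psi^* g = P_+(\overline{\psi}g)$ and the conjugation carrying the $M_{\overline z}$-invariant subspace $\ker H_\psi^* \subseteq H^2_-$ onto an $M_z$-invariant subspace of $zH^2$, gives the equivalence of (ii) and (iii). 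The points you flag as delicate all check out: the adjoint formula is a one-line duality computation, the conjugated kernel lands inside $zH^2$ so the Beurling generator $w$ satisfies $w(0)=0$ and $\overline{w} \in H^2_-$ as needed, and under (iii) the element $\overline{zu}$ is indeed a nonzero vector of $\ker H_\psi^*$ since $\overline{\psi}\,\overline{zu} = \overline{z\phi} \in H^2_-$. Since you prove (i) $\Leftrightarrow$ (iii) and (ii) $\Leftrightarrow$ (iii), the cycle closes and the lemma is established.
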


	Finally, we need the following deep result from \cite{Treil}
	(see \cite[Ch.~12, Thm.~8.1]{Peller}):
	
	\begin{Lemma}\label{LemmaModulus}
		If $A \geq 0$ is an operator on a separable, infinite-dimensional
		Hilbert space, then the following are equivalent:
		\begin{enumerate}\addtolength{\itemsep}{0.35\baselineskip}
			\item $A$ is unitarily equivalent to the modulus of a Hankel
				operator,
			\item $A$ is unitarily equivalent to the modulus of a
				self-adjoint Hankel operator,
			\item $A$ is not invertible, and $\ker A$ is either trivial or
				infinite-dimensional.
		\end{enumerate}
	\end{Lemma}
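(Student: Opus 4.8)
The plan is to treat the statement as a cycle of implications and to isolate the one genuinely deep step. The implication (ii)$\Rightarrow$(i) is immediate, since a self-adjoint Hankel operator is in particular a Hankel operator. I would therefore concentrate on establishing (i)$\Rightarrow$(iii) by hand and on identifying (iii)$\Rightarrow$(ii) as the result to be imported from \cite{Treil}.

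For (i)$\Rightarrow$(iii), suppose $A \cong |H_{\psi}|$ for some $\psi$ in $L^{\infty}$. Since $\ker |H_{\psi}| = \ker H_{\psi}$, everything reduces to understanding the kernel of $H_{\psi}$ together with the failure of invertibility. The key structural input is the intertwining relation $H_{\psi} S = S_-^* H_{\psi}$, where $S$ is the unilateral shift on $H^2$ and $S_-^*$ is the backward shift on $H^2_-$; this is verified by a short computation with $P_-$. From this relation I would extract both conclusions. First, $\ker H_{\psi}$ is invariant under $S$, since $H_{\psi} f = 0$ forces $H_{\psi}(Sf) = S_-^* H_{\psi} f = 0$; by Beurling's theorem such a subspace is either $\{0\}$ or $\theta H^2$ for some inner $\theta$, and the latter is always infinite-dimensional, so $\ker A$ is trivial or infinite-dimensional. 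Second, iterating gives $\| H_{\psi} S^n f \| = \| (S_-^*)^n H_{\psi} f \| \to 0$, because the powers of the backward shift tend to $0$ strongly, while $\| S^n f \| = \| f \|$; hence $H_{\psi}$ is not bounded below and $A = |H_{\psi}|$ cannot be invertible.

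The remaining implication (iii)$\Rightarrow$(ii) is the heart of the theorem and the step I expect to be the main obstacle. One is handed an arbitrary positive, non-invertible operator whose kernel is trivial or infinite-dimensional and must \emph{produce} a self-adjoint Hankel operator realizing it as a modulus, up to unitary equivalence. The difficulty is that the modulus of a Hankel operator is highly constrained, so matching a prescribed spectral measure and multiplicity function demands a delicate explicit construction of the symbol (in the circle of ideas surrounding Adamyan--Arov--Krein). I would not attempt to reprove this; instead I would cite it as the deep input, exactly as the paper does, observing that the obstructions uncovered in the first direction are precisely the conditions in (iii), which is what makes the characterization sharp.
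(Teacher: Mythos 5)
Your proposal is correct, and it actually contains more argument than the paper does: the paper offers no proof of this lemma at all, importing the entire equivalence as a known deep result of Treil (citing \cite[Ch.~12, Thm.~8.1]{Peller}). You have correctly identified (iii)$\Rightarrow$(ii) as the only genuinely deep implication --- the one requiring the Adamyan--Arov--Krein-type construction of a symbol realizing a prescribed modulus --- and your elementary proofs of the remaining implications are sound. In particular, for (i)$\Rightarrow$(iii): the relation $H_{\psi}S = \bigl(P_-M_z|_{H^2_-}\bigr)H_{\psi}$ follows since $P_-\bigl(zP_+(\psi f)\bigr)=0$; the kernel argument via Beurling's theorem is right, since $\ker H_{\psi}$ is a closed $S$-invariant subspace, hence $\{0\}$ or $\theta H^2$ with $\theta$ inner, and the latter is infinite-dimensional; and non-invertibility follows since $\norm{H_{\psi}S^nf} = \norm{(P_-M_z)^nH_{\psi}f}\to 0$ while $\norm{S^nf}=\norm{f}$, so $H_{\psi}$ (hence $|H_{\psi}|$) is not bounded below. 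The only caveat worth recording is that your write-up should make explicit that $\ker|H_{\psi}| = \ker H_{\psi}$ and that a positive operator is invertible if and only if it is bounded below --- both one-line facts, but they are the hinges on which the reduction to $H_{\psi}$ turns. Relative to the paper, your decomposition into ``easy cycle plus one deep citation'' is a legitimate refinement rather than a different method, since both treatments ultimately rest on Treil's theorem for the substantive direction.
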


	The following general result 
	may be of independent interest.
	
	\begin{Lemma}\label{le:modulus}
		Any positive operator is unitarily equivalent to the modulus of an analytic
		truncated Toeplitz operator.
	\end{Lemma}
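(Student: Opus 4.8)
The plan is to reduce the statement to Treil's theorem (Lemma~\ref{LemmaModulus}) by exploiting the factorization \eqref{eq:Hankel} together with the freedom to enlarge the underlying model space. First I would decompose the given positive operator as $A = A_0 \oplus 0$, where $A_0$ is the injective positive operator obtained by restricting $A$ to $\cran A = (\ker A)^{\perp}$ and the second summand acts on $\ker A$; set $d = \dim \ker A \in \{0,1,2,\dots,\infty\}$. The key computation is that if $\psi = \overline{u}\phi$ with $u$ inner and $\phi$ in $H^{\infty}$, then, since multiplication by $u$ is unitary on $L^2$ and hence isometric, \eqref{eq:Hankel} gives $|A_{\phi}^u| = \bigl| H_{\psi}|_{\K_u} \bigr|$. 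Writing $\theta$ for the inner function with $\ker H_{\psi} = \theta H^2$ (so that $\theta$ divides $u$, say $u = \theta\theta'$), one has the standard splitting $\K_u = \K_{\theta} \oplus \theta\K_{\theta'}$, and $|H_{\psi}|$ vanishes on $\theta\K_{\theta'} \subseteq \theta H^2$ while being injective on $\K_{\theta}$. Thus $\K_u$ reduces $|H_{\psi}|$, the modulus of the restriction equals the restriction of the modulus, and
\[
	|A_{\phi}^u| \;\cong\; \bigl( \text{injective part of } |H_{\psi}| \bigr) \oplus 0_{\dim \K_{\theta'}}.
\]

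This identity is the engine of the proof: the injective part of $|A_{\phi}^u|$ is dictated by the Hankel operator $H_{\psi}$, whereas the dimension of its kernel can be prescribed freely through the extra inner factor $\theta'$. To produce $A_0$, I would apply Treil's theorem to the padded operator $A_0 \oplus 0_{\aleph_0}$ acting on a separable, infinite-dimensional space. This operator is positive, is not invertible, and has infinite-dimensional kernel, so condition (iii) of Lemma~\ref{LemmaModulus} holds; consequently it is unitarily equivalent to $|H_{\psi}|$ for some self-adjoint Hankel operator $H_{\psi}$. Because this modulus has nontrivial (indeed infinite-dimensional) kernel, Lemma~\ref{LemmaSymbol} furnishes a factorization $\psi = \overline{\theta}\phi$ with $\theta$ inner and $\phi$ in $H^{\infty}$, and by construction the injective part of $|H_{\psi}|$ is exactly $A_0$.

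To conclude, I would restore the correct kernel dimension. Choose an inner function $\theta'$ with $\dim \K_{\theta'} = d$ — for instance $\theta' = z^d$ when $d$ is finite and an infinite Blaschke product when $d = \infty$ — and put $u = \theta\theta'$. Then $u$ is again an inner multiple of $\theta$, so $\psi = \overline{u}\,(\theta'\phi)$ with $\theta'\phi$ in $H^{\infty}$, and the displayed identity applied to the analytic truncated Toeplitz operator $A_{\theta'\phi}^u$ gives $|A_{\theta'\phi}^u| \cong A_0 \oplus 0_d \cong A$. The degenerate case $A = 0$, and more generally the bookkeeping when $A_0$ acts on a trivial or finite-dimensional space (so that $\theta'$ must be chosen nonconstant to keep $u$ nonconstant), should be verified separately but is routine.

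The main obstacle is exactly the mismatch between the hypotheses of Treil's theorem and an arbitrary positive operator: Lemma~\ref{LemmaModulus} never yields an invertible operator, nor one with nonzero finite-dimensional kernel, whereas $A$ may be of either type. The resolution — padding with an infinite-dimensional kernel to force the Treil hypotheses, and then recovering the genuine kernel of $A$ through the factor $\theta'$ embedded in the model space — is the crux of the argument. The most delicate point to justify carefully is that $\K_u$ reduces $|H_{\psi}|$, since it is this fact that allows the prescribed zero summand $0_d$ to split off cleanly in the modulus.
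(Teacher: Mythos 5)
Your proposal is correct and is essentially the paper's own argument: both pad the positive operator with an infinite-dimensional kernel so that Treil's theorem (Lemma~\ref{LemmaModulus}) applies, factor the resulting Hankel symbol via Lemma~\ref{LemmaSymbol}, use \eqref{eq:Hankel} to identify the modulus of an analytic truncated Toeplitz operator with that of the restricted Hankel operator, and then restore $\dim \ker A$ by adjoining an extra inner factor to the model space --- your $A^{\theta\theta'}_{\theta'\phi}$ is precisely the paper's $A^{uv}_{v\phi}$. The only difference is presentational: where you verify the key identity $\bigl|A^u_\phi\bigr| \cong (\text{injective part of } |H_\psi|) \oplus 0$ by hand, via Beurling's theorem and the reducing-subspace argument, the paper instead passes to coprime $u$ and $\phi$ and cites Peller's theorem that $H_{\overline{u}\phi}|_{\K_u}$ is injective with dense range, together with the explicit unitary $M_u$.
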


	\begin{proof}
		Suppose $B \geq 0$ and consider the operator $B'=B\oplus 0$, where 0 acts on
		an infinite dimensional Hilbert space. By Lemma~\ref{LemmaModulus}, $B'$ is
		unitarily equivalent to the modulus $|H_{\psi}|$ of some Hankel operator $H_\psi:H^2\to H^2_-$.  In light
		of Lemma~\ref{LemmaSymbol}, it follows that $\psi = \overline{u}\phi$ for 
		some inner function $u$ and some $\phi$ in $H^{\infty}$. 
		We may assume that $u$ and $\phi$ are coprime, since a common inner factor
		of both would cancel in the evaluation of $\psi = \overline{u}\phi$.  
		
		By \cite[Ch.1, Thm~2.4]{Peller},
		the restriction $$\hat H:\K_u\to H^2_-\ominus \overline{u}H^2_-$$ of 
		$H_{\overline{u}\phi}$ to $\K_u$ is injective and has dense range.
		In other words, $|\hat H|$ is unitarily equivalent to
		$B|_{(\ker B)^{\perp}}$.  The operator 
		\begin{equation*}
			W:H^2_-\ominus \overline{u}H^2_-\to \K_u
		\end{equation*}
		defined by $W = M_u|_{H^2_-\ominus \overline{u}H^2_-}$ is unitary and
		satisfies $A^u_\phi=W\hat H$ by \eqref{eq:Hankel}.  Therefore
		\begin{equation*}
			|A^u_\phi| \cong | \hat H|  \cong B|_{(\ker B)^{\perp}}.
		\end{equation*}

		Now let $v$ be an inner function such that $\dim \K_v = \dim \ker B$
		(e.g., a Blaschke product with $\dim \ker B$ zeros).  Noting that 
		\begin{equation*}
			\K_{uv} = K_u\oplus u K_v = vK_u\oplus K_v,
		\end{equation*}
		we see that the matrix of
		\begin{equation*}
			A^{uv}_{v\phi}:K_u\oplus u K_v\to  vK_u\oplus K_v
		\end{equation*}
		with respect to the decompositions above is
		\begin{equation*}
			\minimatrix{vA^u_{\phi}}{0}{0}{0}.
		\end{equation*}
		Since $\dim \K_v = \dim \ker B$, we conclude that
		\begin{equation*}
			|A^{uv}_{v\phi}|
			\cong |A_{\phi}^u| \oplus 0
			\cong B|_{ (\ker B)^{\perp}} \oplus B|_{\ker B} = B.\qedhere
		\end{equation*}
	\end{proof}
	
	Armed with the preceding lemma, we are ready to prove the following theorem.

	\begin{Theorem}\label{th:main}
		If $T$ is nilpotent of order two, then $T$ is unitarily
		equivalent to an analytic truncated Toeplitz operator.
	\end{Theorem}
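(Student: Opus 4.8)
The plan is to reduce to the structure theorem for nilpotents of order two from Lemma~\ref{TheoremCSO} and then build the operator by manipulating model spaces. By the proof of Lemma~\ref{TheoremCSO} we may assume
\begin{equation*}
	T \cong \minimatrix{0}{0}{A}{0} \oplus 0_{\mathcal E},
\end{equation*}
where $A \geq 0$ is injective with dense range on some space $\h_0$ and $0_{\mathcal E}$ is the zero operator on $\mathcal E = \ker T \ominus \cran T$. Thus $T$ is determined up to unitary equivalence by the positive operator $A$ together with $\dim \mathcal E$, and it suffices to produce an analytic truncated Toeplitz operator with these invariants.

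First I would realize $A$. Applying the construction in the proof of Lemma~\ref{le:modulus} to $A$, I obtain coprime inner $u$ and $\phi \in H^{\infty}$ such that $A^u_\phi$ is injective, has dense range, and satisfies $|A^u_\phi| \cong A$ (so $\dim \K_u = \dim \h_0$). The key computation is then to pass to the doubled model space. Using $\K_{u^2} = \K_u \oplus u\K_u$, a direct calculation from $A^{u^2}_{u\phi}f = P_{u^2}(u\phi f)$ (together with \eqref{eq:Hankel}) should give the block form
\begin{equation*}
	A^{u^2}_{u\phi} = \minimatrix{0}{0}{M_u A^u_\phi}{0},
\end{equation*}
where $M_u \colon \K_u \to u\K_u$ is the unitary operator of multiplication by $u$. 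Since $M_u$ is unitary, $M_u A^u_\phi$ is again injective with dense range and $|M_u A^u_\phi| = |A^u_\phi| \cong A$; conjugating by the unitary arising from the polar decomposition of $M_u A^u_\phi$ then yields $A^{u^2}_{u\phi} \cong \minimatrix{0}{0}{A}{0}$. This settles the case $\mathcal E = 0$.

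To account for the extra kernel, I would absorb a zero summand of the correct size, exactly as in the second half of the proof of Lemma~\ref{le:modulus}. Choosing an inner function $v$ with $\dim \K_v = \dim \mathcal E$ (a finite Blaschke product, an infinite Blaschke product, or a singular inner function according to whether $\dim \mathcal E$ is finite positive or infinite) and using $\K_{u^2 v} = \K_{u^2} \oplus u^2 \K_v = v\K_{u^2} \oplus \K_v$, the operator $A^{u^2 v}_{vu\phi}$ has block form $\minimatrix{vA^{u^2}_{u\phi}}{0}{0}{0}$. As $M_v$ is unitary, $vA^{u^2}_{u\phi} = M_v A^{u^2}_{u\phi} \cong A^{u^2}_{u\phi} \cong \minimatrix{0}{0}{A}{0}$, and therefore
\begin{equation*}
	A^{u^2 v}_{vu\phi} \cong \minimatrix{0}{0}{A}{0} \oplus 0_{\K_v} \cong \minimatrix{0}{0}{A}{0} \oplus 0_{\mathcal E} \cong T,
\end{equation*}
with analytic symbol $vu\phi \in H^{\infty}$. (When $\mathcal E = 0$ one omits this step and takes $A^{u^2}_{u\phi}$.)

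The main obstacle I anticipate is the block computation for $A^{u^2}_{u\phi}$ together with the careful bookkeeping of the kernel: one must check not only that the modulus is correct but that the dimension of the excess kernel $\ker \ominus \cran$ matches $\dim \mathcal E$ \emph{exactly}. This is why I insist on taking $A^u_\phi$ injective with dense range, so that the doubling $u \mapsto u^2$ introduces no spurious kernel, and then separately calibrate $v$ so that the zero block contributes precisely $\dim \mathcal E$; getting these two dimension counts to agree is the crux of the argument.
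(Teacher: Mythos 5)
Your proposal is correct, and it constructs literally the same truncated Toeplitz operator as the paper: your $A^{u^2v}_{vu\phi}$ is the paper's $A^{u^2v}_{uv\phi}$, obtained from the same two ingredients (Lemma~\ref{le:modulus} to realize the positive part as $|A^u_\phi|$, then passage to the model space $\K_{u^2v}$). Where you genuinely diverge is in how the unitary equivalence is certified. The paper works with the single orthogonal decomposition $\K_{u^2v}=\K_u\oplus u\K_v\oplus uv\K_u$, in which the operator has the corner entry $uvA^u_\phi$, and then invokes a nontrivial fact about complex symmetric operators \cite[Cor.~1]{G-P-II} --- that $A^u_\phi$ admits a polar decomposition $A^u_\phi=V|A^u_\phi|$ with $V$ genuinely \emph{unitary} --- to build an explicit unitary $W$ conjugating $A^{u^2v}_{uv\phi}$ onto the canonical form. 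You avoid that citation entirely: by opening up the proof of Lemma~\ref{le:modulus} and keeping track of the coprimality of $u$ and $\phi$, you arrange $A^u_\phi$ to be injective with dense range, so the partial isometry in its polar decomposition is unitary for elementary reasons; this is a real simplification, though it costs you the extra bookkeeping of applying the lemma only to the injective part $A$ and calibrating $v$ against $\dim\mathcal{E}$ separately (the paper's route works for any positive $B$ and any kernel, since \cite[Cor.~1]{G-P-II} needs no injectivity). One caution: your intermediate assertions like $M_vA^{u^2}_{u\phi}\cong A^{u^2}_{u\phi}$ compare operators acting between \emph{different} spaces (and the pieces $v\K_{u^2}$ and $\K_{u^2}$ of your two decompositions are not the same subspace), so they are not literally unitary equivalences; what rescues the argument is exactly the principle you state at the outset, namely that by \eqref{eq:form nilp 2} a nilpotent of order at most two is determined up to unitary equivalence by $|X|$ restricted to $(\ker X)^\perp$ together with $\dim(\ker X\ominus\cran X)$. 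Since your $A^{u^2v}_{vu\phi}$ is visibly nilpotent of order two with $\ker X=u\K_v\oplus uv\K_u$, $\cran X=uv\K_u$, and $|X|\big|_{\K_u}=|A^u_\phi|\cong A$, both invariants match those of $T$, and the proof closes rigorously. So: same construction, but a more self-contained verification (polar decomposition of an injective dense-range operator plus invariant matching) in place of the paper's explicit unitary built from the complex-symmetric polar decomposition.
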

	
	\begin{proof}
		It follows from \eqref{eq:form nilp 2} that any nilpotent 
		of order two is unitarily equivalent to an operator of the form
		\begin{equation*}
			N = \megamatrix{0}{0}{0}{0}{0}{0}{B}{0}{0}
		\end{equation*}
		on $\h\oplus \h'\oplus \h$, where $\h, \h'$ are Hilbert spaces and $B\geq 0$ acts
		on $\h$. By Lemma~\ref{le:modulus}, we may assume $\h=\K_u$ and that
		$B=|A^{u}_{\phi}|$ for some inner function $u$ and $\phi$ in $H^{\infty}$.
		Let ${v}$ be an inner function such that $\dim \K_{v}  =\dim \h'$
		and let $\omega:\h'\to \K_{v}$ be unitary.  With respect to the
		decomposition
		\begin{equation*}
			\K_{u ^2{v}  }=\K_u \oplus u \K_{v}  \oplus u
			{v}   \K_u 
		\end{equation*}
		we have
		\begin{equation*}
			A^{u ^2{v}  }_{u {v}  \phi}=
			\begin{bmatrix}
				0&0&0\\0&0&0\\u {v}   A^u _\phi &0 &0
			\end{bmatrix}.
		\end{equation*}
		Since $A^u _\phi$ is complex symmetric, we can write
		$A^u _\phi=V|A^u _\phi|$ with $V$ unitary \cite[Cor.~1]{G-P-II}. If 
		$W$ denotes the unitary operator
		\begin{equation*} 
			(I_{\K_u
			}\oplus \omega^*\bar u  \oplus V^*\bar u  \bar {v}  ): \K_u
			\oplus u\K_{v}  \oplus u {v}   \K_u \to \K_u
			\oplus \h'\oplus \K_u ,
		\end{equation*}
		then
		\begin{equation*}
			W (A^{u ^2{v}  }_{u {v}  \phi}) W^*=N,
		\end{equation*}
		which proves the theorem.
	\end{proof}

	\begin{Example}
		If $A$ is a noncompact operator on $\h$, then the operator
		\begin{equation*}
			T = \minimatrix{0}{0}{A}{0}
		\end{equation*}
		on $\h\oplus \h$ is unitarily equivalent to an analytic truncated Toeplitz operator $A_{\phi}^u$
		However, since any truncated Toeplitz operator whose symbol is continuous on the unit circle must be of the form
		normal plus compact  \cite[Thm.~1, Cor.~2]{CAGTTO}, $\phi$ cannot be continuous.
	\end{Example}

\section{Open questions}

	We conclude this note with some questions suggested by the preceding work.
	
	\begin{Question}
		Formula~\eqref{eq:word} may be generalized by considering polynomials
		$p(x,y)$ in two noncommuting
		variables $x,y$. Then
		\begin{equation*}
			p(T,T^*) = C\widetilde{p}(T^*,T)C
		\end{equation*}
		where $\widetilde{p}(x,y)$ is obtained from $p(x,y)$ by conjugating each
		coefficient. If $T$ is complex symmetric, it follows then that 
		\begin{equation}\label{eq-Polynomial}
			\norm{p(T,T^*)} = \norm{ \widetilde{p}(T^*,T)}
		\end{equation}
		holds for every $p(x,y)$.
		 Does the converse hold? That is, if $T$ in
		$B(\h)$ satisfies \eqref{eq-Polynomial} for every
		polynomial $p(x,y)$ in two noncommuting variables $x,y$, does it
		follow that 
		$T$ is a complex symmetric operator?
	\end{Question}

	Note that considering only \emph{words} in $T$ and $T^*$ is not sufficient to 
	characterize complex symmetric operators.  Indeed, if $S$ denotes the
	unilateral shift, then it is easy to see that
	$\norm{w(S,S^*)} = \norm{\widetilde{w}(S^*,S)} = 1$ for any word $w(x,y)$.

	The following question stems from the proof of Theorem~\ref{th:main}.
	
	\begin{Question}
		If $T$ is unitarily equivalent to a truncated Toeplitz operator, then does the operator 
		$T \oplus 0$ have the same property?
	\end{Question}

	Although partial results in this direction appear in \cite[Section 6]{STZ},
	the preceding question appears troublesome even in low dimensions.  

\bibliography{TRNOOT}

\def\cprime{$'$}
\providecommand{\bysame}{\leavevmode\hbox to3em{\hrulefill}\thinspace}
\providecommand{\MR}{\relax\ifhmode\unskip\space\fi MR }
\providecommand{\MRhref}[2]{%
  \href{http://www.ams.org/mathscinet-getitem?mr=#1}{#2}
}
\providecommand{\href}[2]{#2}
\begin{thebibliography}{10}

\bibitem{TTOSIUES}
J.~A. Cima, S.~R. Garcia, W.~T. Ross, and W.~R. Wogen, \emph{Truncated
  {T}oeplitz operators: spatial isomorphism, unitary equivalence, and
  similarity}, Indiana U. Math. J. \textbf{59} (2010), no.~2, 595--620.

\bibitem{CCO}
S.~R. Garcia, \emph{Conjugation and {C}lark operators}, Recent advances in
  operator-related function theory, Contemp. Math., vol. 393, Amer. Math. Soc.,
  Providence, RI, 2006, pp.~67--111.

\bibitem{G-P}
S.~R. Garcia and M.~Putinar, \emph{Complex symmetric operators and
  applications}, Trans. Amer. Math. Soc. \textbf{358} (2006), no.~3, 1285--1315
  (electronic).

\bibitem{G-P-II}
\bysame, \emph{Complex symmetric operators and applications. {II}}, Trans.
  Amer. Math. Soc. \textbf{359} (2007), no.~8, 3913--3931 (electronic).

\bibitem{RPTTO}
S.~R. Garcia and W.~T. Ross, \emph{Recent progress on truncated {T}oeplitz
  operators}, Fields Institute Proceedings (to appear),
  \url{http://arxiv.org/abs/1108.1858}.

\bibitem{CAGTTO}
S.~R. Garcia, W.~T. Ross, and W.R. Wogen, \emph{${C}^*$-algebras generated by
  truncated {T}oeplitz operators}, Oper. Theory Adv. Appl. (to appear),
  \url{http://arxiv.org/abs/1203.2412}.

\bibitem{MUCFO}
Stephan~Ramon Garcia, \emph{Means of unitaries, conjugations, and the
  {F}riedrichs operator}, J. Math. Anal. Appl. \textbf{335} (2007), no.~2,
  941--947. \MR{2345511 (2008i:47070)}

\bibitem{ATCSO}
\bysame, \emph{Aluthge transforms of complex symmetric operators}, Integral
  Equations Operator Theory \textbf{60} (2008), no.~3, 357--367. \MR{2392831
  (2008m:47052)}

\bibitem{ATTO}
Stephan~Ramon Garcia, Daniel~E. Poore, and William~T. Ross, \emph{Unitary
  equivalence to a truncated {T}oeplitz operator: analytic symbols}, Proc.
  Amer. Math. Soc. \textbf{140} (2012), no.~4, 1281--1295. \MR{2869112}

\bibitem{SNCSO}
Stephan~Ramon Garcia and Warren~R. Wogen, \emph{Some new classes of complex
  symmetric operators}, Trans. Amer. Math. Soc. \textbf{362} (2010), no.~11,
  6065--6077. \MR{2661508 (2011g:47086)}

\bibitem{Peller}
Vladimir~V. Peller, \emph{Hankel operators and their applications}, Springer
  Monographs in Mathematics, Springer-Verlag, New York, 2003. \MR{1949210
  (2004e:47040)}

\bibitem{Sarason}
D.~Sarason, \emph{Algebraic properties of truncated {T}oeplitz operators},
  Oper. Matrices \textbf{1} (2007), no.~4, 491--526.

\bibitem{STZ}
E.~Strouse, D.~Timotin, and M.~Zarrabi, \emph{Unitary equivalence to truncated
  {T}oeplitz operators}, Indiana Univ. Math. J. (to appear).

\bibitem{Treil}
S.~R. Treil{\cprime}, \emph{An inverse spectral problem for the modulus of the
  {H}ankel operator, and balanced realizations}, Algebra i Analiz \textbf{2}
  (1990), no.~2, 158--182. \MR{1062268 (91k:47060)}

\end{thebibliography}

\end{document}